\newtheorem{definition}{Definition}
\newtheorem{lemma}{Lemma}
\newtheorem{theorem}{Theorem}
\newtheorem{assumption}{Assumption}
\newtheorem{remark}{Remark}
\pgfplotsset{compat=1.17}
\DeclareRobustCommand{\qed}{%
  \ifmmode 
  \else \leavevmode\unskip\penalty9999 \hbox{}\nobreak\hfill
  \fi
  \quad\hbox{\qedsymbol}}
\newcommand{\openbox}{\leavevmode
  \hbox to.77778em{%
  \hfil\vrule
  \vbox to.675em{\hrule width.6em\vfil\hrule}%
  \vrule\hfil}}
\newcommand{\qedsymbol}{\openbox}
\newenvironment{proof}[1][\proofname]{\par
  \normalfont
  \topsep6\p@\@plus6\p@ \trivlist
  \item[\hskip\labelsep\itshape
    #1.]\ignorespaces
}{%
  \qed\endtrivlist
}
\newcommand{\proofname}{Proof}
\begin{document}
\begin{frontmatter}

\title{Anderson Accelerated Feasible Sequential Linear Programming\thanksref{footnoteinfo}} 

\thanks[footnoteinfo]{This work has been carried out within the framework of Flanders
Make SBO DIRAC: DIRAC - Deterministic and Inexpensive Realizations of Advanced
Control.}

\author[First]{David Kiessling} 
\author[Second]{Pieter Pas}
\author[First]{Alejandro Astudillo}
\author[Second]{Panagiotis Patrinos}
\author[First]{Jan Swevers}

\address[First]{MECO~Research~Team,~Dept.~of~Mechanical~Engineering,~KU~Leuven.\\
Flanders Make - DMMS-M, Leuven, Belgium.
    (e-mail: \{david.kiessling, alejandro.astudillovigoya, jan.swevers\}@kuleuven.be).}
\address[Second]{Department of Electrical Engineering (ESAT),
STADIUS Center for Dynamical Systems, Signal Processing and Data
Analytics, KU Leuven. (e-mail: \{pieter.pas, panos.patrinos\}@kuleuven.be)}

\begin{abstract}                
This paper proposes an accelerated version of Feasible Sequential Linear Programming (FSLP): the AA($d$)-FSLP algorithm. FSLP preserves feasibility in all intermediate iterates by means of an iterative update strategy which is based on repeated evaluation of zero-order information. This technique was successfully applied to techniques such as Model Predictive Control and Moving Horizon Estimation, but it can exhibit slow convergence. Moreover, keeping all iterates feasible in FSLP entails a large number of additional constraint evaluations. In this paper, Anderson Acceleration (AA($d$)) is applied to the zero-order update strategy improving the convergence rate and therefore decreasing the number of constraint evaluations in the inner iterative procedure of the FSLP algorithm. AA($d$) achieves an improved contraction rate in the inner iterations, with proven local linear convergence. In addition, it is observed that due to the improved zero-order update strategy, AA($d$)-FSLP takes larger steps to find an optimal solution, yielding faster overall convergence. The performance of AA($d$)-FSLP is examined for a time-optimal point-to-point motion problem of a parallel SCARA robot. The reduction of the number of constraint evaluations and overall iterations compared to FSLP is successfully demonstrated.  
\end{abstract}

\begin{keyword}
Numerical methods for optimal control, Model predictive and optimization-based control, Predictive control
\end{keyword}

\end{frontmatter}

\section{Introduction}

Optimization algorithms based on zero-order information are a popular choice in real-time optimization to keep intermediate iterates feasible. These iterative strategies are based on Newton-type optimization methods that keep derivative information fixed at a given point. Only the repeated evaluation of zero-order information, i.e., function evaluations, is required. First introduced in \cite{Bock2007} to reduce the computational effort in Sequential Quadratic Programming (SQP) methods, zero-order methods were successfully used in many different applications such as Moving Horizon Estimation (MHE) \citep{Baumgaertner2019}, Iterative Learning Control (ILC) \citep{Baumgaertner2020a}, and Model Predictive Control (MPC) \citep{Zanelli_2019}. A feasible SQP method was introduced in \cite{Zanelli2021}. Exploiting the so-called fully determined case, a Feasible Sequential Linear Programming (FSLP) algorithm was introduced in \cite{Kiessling2022}. An iterative zero-order update strategy, called feasibility iterations or inner iterations, enables feasible iterates. Feasible optimization algorithms are particularly interesting for real-time applications where execution time is limited. Keeping all iterates feasible enables a solver to terminate at suboptimal, feasible points, which is beneficial for time-critical approaches such as MPC, where a timely feasible solution is often prioritized over full convergence of the solution.\\
In general, the FSLP algorithm admits only local linear convergence in the outer iterations. Furthermore, an efficient subproblem solver is needed to solve the arising LPs in the inner iterations. Moreover, the feasibility iterations can suffer from slow convergence, which results in many additional constraint evaluations. \\
In this paper, we propose an Anderson Acceleration (AA) strategy with memory $d \in \mathbb{N}$ of previous iterates, which we call AA($d$), in order to improve on the latter drawback, i.e., the number of feasibility iterations. Introduced in the context of integral equations in \cite{Anderson1965}, AA is a widely used method to improve the convergence rate of fixed point iterations. The step update is computed by an affine combination of previous iterates and steps. Since Newton-type methods can be interpreted as fixed point iterations, AA is applicable to zero-order based methods. The local convergence of AA($d$) is proven and the convergence improvement in the inner iterations is successfully demonstrated on a time-optimal point-to-point (P2P) motion problem of a SCARA robot.\\
This paper is structured as follows. Section \ref{section_fslp} introduces the FSLP algorithm and describes the feasibility iterations. Section \ref{section_anderson} discusses the application of AA to the feasibility iterations procedure. Section \ref{section_example} presents a simulation example of a time-optimal point-to-point motion problem of a SCARA robot. Section \ref{section_conclusion} closes the paper with concluding remarks and ongoing developments.

\section{Feasible Sequential Linear Programming}
\label{section_fslp}
First, we introduce the notation and formulate the general optimization problem that will be solved in this paper. Next, we briefly discuss the FSLP algorithm with a focus on the feasibility iteration procedure.

\subsection{Notation \& Problem Formulation}
The optimization problem structure is the same as introduced in \cite{Kiessling2022}. Let $w \in \mathbb{R}^{n_w}$, then the general Nonlinear Program (NLP) is defined by
\begin{equation}
\label{eq:nlp}
\begin{split}
\min_{w \in \mathbb{R}^{n_w}} \: c^{\top}w\quad\mathrm{s.t.} \:\: Cw + g(P_yw)=0,\:\: Aw+b\leq 0.
\end{split}
\end{equation}
Here, $c\in\mathbb{R}^{n_w}$, $b\in\mathbb{R}^{n_b}$, $A\in\mathbb{R}^{n_b\times n_w}$ with full row rank $n_b$  and let $g\colon\mathbb{R}^{n_y}\to\mathbb{R}^{n_g}$ with $C\in\mathbb{R}^{n_g\times n_w}$. Further assume that $g$ is twice continuously differentiable. The projection matrix $P_y\in\mathbb{R}^{n_y\times n_w}$ is a sparse matrix, composed of rows with each a single 1-entry, that selects variables of $w$ that enter into the constraints nonlinearly. General nonlinear objective functions and inequality constraints can be cast in the form \eqref{eq:nlp} by introducing slack variables. Within this paper, iteration indices will always be represented by subscript letters, e.g., $w_1\in\mathbb{R}^{n_w}$ whereas indices of vector entries are always specified by superscript letters, e.g., $w^1\in\mathbb{R}$. The measure of infeasibility is defined by $h(w) := \Vert Cw + g(P_y w) \Vert_{\infty} + \Vert [Aw+b]^+ \Vert_{\infty}$, where $[Aw+b]^+ := [\max\{A^iw+b^i,\,0\}]_{i=1}^{n_b}$. The feasible set is denoted by $\mathcal{F} := \{w\in\mathbb{R}^{n_w}\,|\, Cw + g(P_y w)=0,\,Aw+b\leq 0\}$. Let the set of active constraints at $w\in\mathcal{F}$ be defined by $\mathcal{A}(w):= \{i\in\{1,\ldots,\,n_b\}\:|\:A^i w+b^i=0\}$. The inner product of $\mathbb{R}^{n_w}$ is defined by $\langle \cdot \; , \; \cdot \rangle : \mathbb{R}^{n_w} \times \mathbb{R}^{n_w} \rightarrow \mathbb{R}$. 
The closed ball around $\hat{w}$ with radius $\gamma>0$ is denoted by $\mathcal{B}(\hat{w},\,\gamma):=\{w\in\mathbb{R}^{n_w}\: |\:\Vert\hat{w}-w\Vert_2\leq\gamma \}$. For a given $\hat{\Delta} > 0$ and a $\hat{w}\in\mathbb{R}^{n_w}$, we define the set $\mathcal{B}_{\infty}(\hat{w},\,\hat{\Delta}) := \{w\in\mathbb{R}^{n_w}\, |\, \Vert w-\hat{w}\Vert_{\infty} \leq \hat{\Delta} \}$. The projection of a point $w\in\mathbb{R}^{n_w}$ onto a nonempty closed convex set $C\subset\mathbb{R}^{n_w}$, $\Pi_{C}(w)$, is defined by $\Pi_{C}(w) = \mathrm{argmin}_{y\in C} \Vert y-w \Vert_{2}$. Unless otherwise specified, $\Vert \cdot \Vert$ denotes the Euclidean norm. 

\subsection{Feasible Sequential Linear Programming Algorithm}
A full description of the FSLP algorithm is given by \cite{Kiessling2022}. The particularity of FSLP is that all iterates are kept feasible by an efficient iterative procedure which is described in the subsequent section. Additionally, a trust-region in combination with a merit function is used for global convergence.\\ 
The algorithm needs to be initialized at a feasible point. Let $\hat{w}\in\mathcal{F}$ be the point of linearization, then the algorithm solves a sequence of linear programs (LPs) of the following form:
\begin{subequations}
\label{eq:LP}
\begin{align}
\min_{w \in \mathbb{R}^{n_w}} \quad&c^{\top} w\\
\mathrm{s.t.}\quad &Cw + \nabla g(P_y\hat{w})^{\top}P_y (w-\hat{w}) = 0 \label{FP-QPequal},\\
\quad &Aw +b \leq 0\label{FP-QPinequal},\\
\quad &||P_y(w-\hat{w})||_{\infty} \leq \Delta.
\end{align}
\end{subequations}
In \eqref{eq:LP}, the projection matrix $P_y$, which was defined in \eqref{eq:nlp}, defines the optimization variables that are considered inside the trust-region. Variables appearing only in linear constraints do not need to be controlled by the trust-region and can therefore be excluded by $P_y$. The solution of \eqref{eq:LP} is denoted by $\bar{w}\in\mathbb{R}^{n_w}$ and is projected onto the feasible set. The resulting feasible iterate is denoted by $\tilde{w}\in\mathbb{R}^{n_w}$. For guaranteeing global convergence of FSLP, it needs to be ensured that the \textit{projection ratio condition}, i.e., $\Vert \bar{w}-\tilde{w}\Vert / \Vert \bar{w}-\hat{w}\Vert < 1/2$, is satisfied.
Since every iterate remains feasible, the merit function is chosen as the objective function. As in standard trust-region algorithms, the ratio of actual to predicted reduction decides upon step acceptance or rejection. As termination criterion, we use the linear model $m(\bar{w};\,\hat{w}) := c^{\top} (\bar{w}-\hat{w})$. If the model does not decrease through a new LP iterate $\bar{w}$, an optimal point was found and the FSLP algorithm is terminated. In the following, the projection onto the feasible set will be referred to as inner iterations. One iteration of FSLP including the feasibility projection will be denoted as an outer iteration.

\subsection{Inner Iterations}
This section introduces the inner feasibility iterations of FSLP which calculate a feasible iterate $\tilde{w}\in\mathcal{F}$ from the LP solution $\bar{w}$. 
Let $\hat{w}\in\mathcal{F}$ be the outer iterate and $w_l\in\mathbb{R}^{n_w}$ be an inner iterate for a given inner iterate counter $l\in\mathbb{N}$. The Jacobian of $g$ is fixed at $P_y\hat{w}$, i.e., $G^{\top} := \nabla g(P_y\hat{w})^{\top}P_y$. We define
\begin{align}
\label{eq:delta}
\delta(w_l,\,\hat{w}) := g(P_y w_l)-g(P_y \hat{w}) - G^{\top}(w_l-\hat{w}).
\end{align}
Introducing the notation $\delta_l := \delta(w_l,\,\hat{w})$, the parametric linear program $\mathrm{PLP}(w_l;\,\hat{w},\Delta)$ is defined as
\begin{subequations}
\label{PLP}
\begin{align}
\min_{w \in \mathbb{R}^{n_w}} \quad&c^{\top}w\\
\mathrm{s.t.}\quad &\delta_l +Cw + G^{\top} (w-\hat{w}) = 0,\label{eq:ZOnleq}\\
\quad &Aw +b \leq 0,\\
\quad &||P_y(w-\hat{w})||_{\infty} \leq \Delta.
\end{align}
\end{subequations}
Its solution will be denoted by $w_{\mathrm{PLP}}^*(w_l;\,\hat{w},\Delta)$. We note that for $w_l \leftarrow \hat{w}$, we obtain LP \eqref{eq:LP} and for $w_l \leftarrow \bar{w}$, we obtain a standard second-order correction problem as defined by \cite{Conn2000}. The algorithm is based on the iterative solution of PLPs. In every iteration, the nonlinear constraints are re-evaluated at $w_l$ in the term $\delta_l$ and another PLP is solved. The solution of this problem is denoted by $w_{l+1}$, i.e., $w_{l+1}=w_{\mathrm{PLP}}^*(w_l;\,\hat{w},\Delta)$, and the procedure is repeated. Algorithm \ref{alg:feas_iter} presents this feasibility improvement strategy in detail.\\
\begin{algorithm}[thpb]
\caption{Inner Iterations}
\label{alg:feas_iter}
\begin{algorithmic}[1]
\REQUIRE $\hat{w}\in\mathcal{F}$, fixed Jacobian $G^{\top}=\nabla g(\hat{w})^{\top}P_y$. Let $\bar{w}$ be the solution of \eqref{eq:LP} at $\hat{w}$. $\Delta>0$, $\sigma_{\mathrm{inner}}\in (0,\,10^{-5})$;
\ENSURE $\tilde{w}$
\STATE $w_0\leftarrow \bar{w}$
\FOR{$l=0,\,1,\,2, \ldots$}
\IF{$h(w_l) \leq \sigma_{\mathrm{inner}}$ and $\Vert \bar{w}-w_l\Vert / \Vert \bar{w}-\hat{w}\Vert < 1/2$}
\STATE $\tilde{w}\leftarrow w_l$
\STATE \textbf{STOP}
\ENDIF
\STATE Solve $\mathrm{PLP}(w_l,\,\hat{w},\,\Delta)$
\IF{iterates $w_l$ are diverging}
\STATE \textbf{STOP}
\ENDIF
\STATE $w_{l+1}\leftarrow w_{\mathrm{PLP}}^*(w_l;\,\hat{w},\Delta)$
\ENDFOR
\end{algorithmic}
\end{algorithm}
The feasibility iterations are repeated until convergence towards a feasible point of \eqref{eq:nlp} is achieved. If the iterates are diverging the algorithm is terminated, then the inner algorithm returns to the outer FSLP algorithm, and the trust-region radius is decreased. The trust-region in \eqref{eq:LP} is used to ensure global convergence in the outer iterations and local convergence in the inner iterations.\\
Particularly, in every iteration of Algorithm \ref{alg:feas_iter}, only the constraints are re-evaluated. This is advantageous in applications where the evaluation of first- and second-order derivatives is expensive.\\
The main contribution of \cite{Kiessling2022} is to show that the convergence of Algorithm \ref{alg:feas_iter} is depending on the size of the trust-region. For completeness of presentation, this result is stated here. The main assumption for local convergence is known as strong regularity in the context of generalized equations \citep{Izmailov2014}. 
\begin{assumption}[Strong regularity]
\label{as:invertiblePLP}
For all $\hat{w}\in\mathcal{F}$ exist $L_1,\, L_2,\,\bar{\Delta}>0$ such that for all $w_1,\,w_2\in\mathcal{B}_{\infty}(\hat{w},\,\Delta)$ with $\Delta\leq\bar{\Delta}$, it holds for all $\delta_1,\,\delta_2$ defined in \eqref{eq:delta} and $\delta_1,\,\delta_2\in \mathcal{B}(0,\,L_2\,\Delta)$ that
$\Vert w_{\mathrm{PLP}}^*(w_1;\,\hat{w},\Delta)-w_{\mathrm{PLP}}^*(w_2;\,\hat{w},\Delta)\Vert \leq L_1 \Vert \delta_1-\delta_2\Vert.$
\end{assumption}
\begin{theorem}[Local linear convergence proportional to $\Delta$]
\label{theo:localContractionJosephyNewton}
Let $\hat{w}\in\mathcal{F}$ and let Assumption \ref{as:invertiblePLP} hold, then there exist $\Delta_{\mathrm{max},\,1},\,L>0$ such that for all $\Delta\leq\Delta_{\mathrm{max},\,1}$ it holds that $L\Delta \leq 1$ and the iterates $\{w_l\}_{l\in\mathbb{N}}$ of Algorithm \ref{alg:feas_iter} converge to a point $w^*\in\mathcal{F}$. The contraction rate is proportional to $\Delta$, i.e.,
$
\Vert w_{l+1}-w^*\Vert \leq L \Delta \Vert w_{l}-w^*\Vert.
$
\end{theorem}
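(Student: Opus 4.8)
The plan is to read Algorithm~\ref{alg:feas_iter} as a fixed-point iteration $w_{l+1}=T(w_l)$ for the operator $T(w):=w_{\mathrm{PLP}}^*(w;\,\hat{w},\Delta)$ and to show that, for $\Delta$ small enough, $T$ is a contraction with modulus proportional to $\Delta$ on a neighbourhood of $\hat{w}$; the assertion then follows from the Banach fixed-point theorem. Beyond Assumption~\ref{as:invertiblePLP}, the only ingredient needed is the $C^2$-smoothness of $g$, which controls the linearization remainder $\delta(\cdot,\hat{w})$ defined in \eqref{eq:delta}.

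First I would record two estimates on $\delta$. Since $\delta(w,\hat{w})$ is the first-order Taylor remainder of $g$ at $P_y\hat{w}$ and $\nabla g$ is Lipschitz on a fixed compact neighbourhood of $P_y\hat{w}$ (which, for $\Delta\le\bar{\Delta}$, contains $P_y w$ for every $w\in\mathcal{B}_{\infty}(\hat{w},\Delta)$), the mean-value form of the remainder gives
\[
\Vert \delta(w_1,\hat{w})-\delta(w_2,\hat{w})\Vert \le C\,\Delta\,\Vert w_1-w_2\Vert
\]
for all $w_1,w_2\in\mathcal{B}_{\infty}(\hat{w},\Delta)$, because the trust-region constraint $\Vert P_y(w_i-\hat{w})\Vert_\infty\le\Delta$ bounds the point at which the Hessian is sampled. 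The same Taylor estimate in quadratic form yields $\Vert\delta(w,\hat{w})\Vert\le C'\Delta^2$, so $\delta(w,\hat{w})\in\mathcal{B}(0,L_2\Delta)$ as soon as $\Delta\le L_2/C'$; this applies to $w_0=\bar{w}$ and to every $w_l$, all of which satisfy the trust-region constraint by construction.

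Composing the first estimate with Assumption~\ref{as:invertiblePLP} gives, for the relevant pairs of points,
\[
\Vert T(w_1)-T(w_2)\Vert \le L_1\,\Vert \delta(w_1,\hat{w})-\delta(w_2,\hat{w})\Vert \le L_1 C\,\Delta\,\Vert w_1-w_2\Vert .
\]
Setting $L:=L_1 C$ and $\Delta_{\mathrm{max},\,1}:=\min\{\bar{\Delta},\;L_2/C',\;1/L\}$ (shrinking the last entry slightly so that $L\Delta<1$ strictly), $T$ is a contraction with factor $L\Delta\le 1$ on the trust region, which it maps into itself since every PLP solution obeys the trust-region constraint and, by the quadratic bound, has $\delta$-remainder in $\mathcal{B}(0,L_2\Delta)$. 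Hence $\{w_l\}$ is Cauchy, converges to the unique fixed point $w^*$, and $\Vert w_{l+1}-w^*\Vert=\Vert T(w_l)-T(w^*)\Vert\le L\Delta\,\Vert w_l-w^*\Vert$, which is the claimed contraction. Substituting $w=w_l=w^*$ into the PLP equality \eqref{eq:ZOnleq} and using \eqref{eq:delta}, the affine terms cancel and the exact nonlinear equality of \eqref{eq:nlp} is recovered; together with the retained linear inequalities this shows $w^*\in\mathcal{F}$.

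The main obstacle is the bookkeeping that keeps the \emph{whole} inner sequence inside the region where Assumption~\ref{as:invertiblePLP} is applicable: one must simultaneously ensure $w_l\in\mathcal{B}_{\infty}(\hat{w},\Delta)$ — which follows from the trust-region constraint on the $P_y$-components, the remaining components being pinned down by the active-set structure of the fully determined case — and $\delta(w_l,\hat{w})\in\mathcal{B}(0,L_2\Delta)$, which is exactly where the quadratic remainder bound and the smallness of $\Delta$ enter. Arranging a single threshold $\Delta_{\mathrm{max},\,1}$ that meets all three requirements ($\Delta\le\bar{\Delta}$, the $\delta$-ball condition, and $L\Delta\le 1$) is the crux; the remainder is the standard contraction argument.
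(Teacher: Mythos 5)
Your argument is essentially the paper's own: the core chain --- strong regularity giving $\Vert T(w_1)-T(w_2)\Vert \le L_1\Vert\delta_1-\delta_2\Vert$, composed with the Lipschitz/Taylor remainder bound $\Vert\delta_1-\delta_2\Vert\le \tilde{L}\,\Delta\,\Vert w_1-w_2\Vert$ on the trust region --- is exactly the estimate the paper uses (see the proof of Lemma~\ref{lemma:BoundedW}, which the authors state follows the same argument as the cited proof of this theorem), and your Banach fixed-point wrapper with the invariance bookkeeping is the standard completion. One small caveat: your appeal to ``the active-set structure of the fully determined case'' to control the non-$P_y$ components is neither needed nor available here (the theorem is not restricted to the fully determined case); the contraction of the full iterate already follows directly from Assumption~\ref{as:invertiblePLP}, since $\delta$ depends only on $P_y w$ and the trust region bounds precisely those components.
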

\begin{remark}
    Starting from local convergence of the inner iterates, the projection ratio condition that is needed for global convergence of FSLP can be proven \citep{Kiessling2022}.
\end{remark}
A drawback of Algorithm \ref{alg:feas_iter} is that it can take many iterations to find the optimal, feasible solution. This slows down the solution process since the constraints need to be re-evaluated and many additional LPs need to be solved. In this paper, an Anderson acceleration method is applied to the inner iterations in order to overcome this drawback by improving the convergence rate. 

\section{Anderson Acceleration}
\label{section_anderson}
This section describes the main contribution of this paper, an acceleration strategy to improve the convergence rate of the inner iterations.\\ 
Algorithm \ref{alg:feas_iter} can be interpreted as a fixed point iteration with fixed point operator $w_{\mathrm{PLP}}^*(\hat{w},\,\Delta) \colon \mathbb{R}^{n_w} \to \mathbb{R}^{n_w}$. Then, the fixed point is $\tilde{w} = w^*$. A popular method to accelerate the convergence of fixed point iterations is the so-called \textit{Anderson Acceleration} method which was introduced by \cite{Anderson1965} to solve integral equations. A memory $d\in\mathbb{N}$ of previous iterates is used to improve the convergence rate of the fixed point iteration.
The acceleration algorithm with a given memory of depth $d$ is adapted from \cite{Pollock_newton} and described in Algorithm \ref{alg:AndersonAcc_depthD}.
\begin{algorithm}[htbp]
\caption{AA$(d)$-Inner Iterations}
\label{alg:AndersonAcc_depthD}
\begin{algorithmic}[1]
\REQUIRE $\hat{w}\in\mathcal{F}$, fixed Jacobian $G^{\top}=\nabla g(\hat{w})^{\top}P_y$. Let $\bar{w}$ be the solution of \eqref{eq:LP} at $\hat{w}$. $\Delta>0$,  $\sigma_{\mathrm{inner}}\in (0,\,10^{-5})$;
\ENSURE $\tilde{w}$
\STATE $w_0 \leftarrow\hat{w}$, $w_1\leftarrow \bar{w}$, $r_1\leftarrow w_1 - w_0$
\FOR{$l=1,\,2, \ldots$}
\STATE Set $m = \min\{l,\,d\}$
\IF{$h(w_l) \leq \sigma_{\mathrm{inner}}$ and $\Vert \bar{w}-w_l\Vert / \Vert \bar{w}-\hat{w}\Vert < 1/2$}
\STATE $\tilde{w}\leftarrow w_l$
\STATE \textbf{STOP}
\ENDIF
\STATE Solve $\mathrm{PLP}(w_l,\,\hat{w},\,\Delta)$
\IF{iterates $w_l$ are diverging}
\STATE \textbf{STOP}
\ENDIF
\STATE Define $r_{l+1} = w_{\mathrm{PLP}}^*(w_l;\,\hat{w},\Delta)-w_l$
\STATE Set $F_l = ((r_{l+1}-r_l) \ldots (r_{l-m+2}-r_{l-m+1})) \in \mathbb{R}^{(n_w,\,m)}$ and $E_l = ((w_l-w_{l-1}) \ldots (w_{l-m+1}-w_{l-m})) \in \mathbb{R}^{(n_w,\,m)}$
\STATE Compute $\gamma_{l+1} = \mathrm{argmin}_{\gamma\in\mathbb{R}^m} \Vert r_{l+1} - F_l \gamma\Vert$
\STATE Set $w_{l+1} = \Pi_{\mathcal{B}_{\infty}(\hat{w}, \Delta)}(w_l + r_{l+1} - (E_l + F_l)\gamma_{l+1})$
\ENDFOR
\end{algorithmic}
\end{algorithm}
In the case where the memory $d=1$, calculating $\gamma_{l+1}$ simplifies to
\begin{align}
    \gamma_{l+1} = \frac{\langle r_{l+1}, r_{l+1}-r_l \rangle}{\Vert r_{l+1} - r_l\Vert^2}.
\end{align}
Algorithms \ref{alg:feas_iter} and \ref{alg:AndersonAcc_depthD} differ in their step updates. In AA($d$), the new iterate is computed as an affine combination of the previous steps in memory. The projection operator $\Pi_{\mathcal{B}_{\infty}(\hat{w}, \Delta)}$ ensures that all Anderson iterates stay within the trust-region. The operator clips the components of the vector $w\in\mathbb{R}^{n_w}$ to $\mathcal{B}_{\infty}(\hat{w}, \Delta)$.\\
\cite{Toth2015} show that the contraction rate of their Anderson accelerated method is at least as good as the one of the fixed point iteration. A proof that the contraction rate can be improved is given by \cite{Pollock2020a}. The remainder of this section will focus on proving the convergence of algorithm AA($d$).
\begin{lemma}
\label{lemma:BoundedW}
Let Assumption \ref{as:invertiblePLP} hold, then for the iterates $w_l$ and steps $r_{l+1}$ produced by Algorithm \ref{alg:AndersonAcc_depthD} there exist $L, \Delta >0$ such that
\begin{align}
    \Vert w_l + r_{l+1} - w^*\Vert \leq L\Delta \Vert w_l - w^*\Vert.
\end{align}
\end{lemma}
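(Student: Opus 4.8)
The plan is to observe that $w_l+r_{l+1}$ is exactly one plain (unaccelerated) fixed-point step: by the definition of $r_{l+1}$ in Algorithm~\ref{alg:AndersonAcc_depthD} we have $w_l+r_{l+1}=w_{\mathrm{PLP}}^*(w_l;\,\hat{w},\Delta)$, while the limit point obeys the fixed-point relation $w^*=w_{\mathrm{PLP}}^*(w^*;\,\hat{w},\Delta)$; the latter follows from Theorem~\ref{theo:localContractionJosephyNewton} by passing to the limit in the recursion $w_{l+1}=w_{\mathrm{PLP}}^*(w_l;\,\hat{w},\Delta)$ of Algorithm~\ref{alg:feas_iter} and using that $w_{\mathrm{PLP}}^*(\,\cdot\,;\hat{w},\Delta)$ is continuous (indeed locally Lipschitz, by Assumption~\ref{as:invertiblePLP} together with the twice continuous differentiability of $g$). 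Hence the left-hand side of the claim equals $\Vert w_{\mathrm{PLP}}^*(w_l;\,\hat{w},\Delta)-w_{\mathrm{PLP}}^*(w^*;\,\hat{w},\Delta)\Vert$, and it remains to invoke Assumption~\ref{as:invertiblePLP} and to bound $\Vert\delta(w_l,\hat{w})-\delta(w^*,\hat{w})\Vert$ by a constant multiple of $\Delta\,\Vert w_l-w^*\Vert$.

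First I would fix $\Delta\le\bar{\Delta}$ small enough that the hypotheses of Assumption~\ref{as:invertiblePLP} hold for the pair $(w_l,w^*)$. The inclusion $w_l,w^*\in\mathcal{B}_{\infty}(\hat{w},\Delta)$ follows from the projection $\Pi_{\mathcal{B}_{\infty}(\hat{w},\Delta)}$ applied to every Anderson iterate in Algorithm~\ref{alg:AndersonAcc_depthD} (with $\bar{w}$ already satisfying the trust-region bound of \eqref{eq:LP}) and from $w^*$ being the limit of the feasibility iterates of Algorithm~\ref{alg:feas_iter}, which all respect the trust region. For the residual condition, observe that $w\mapsto\delta(w,\hat{w})$ is the second-order Taylor remainder of $g\circ P_y$ at $\hat{w}$, so that, since $g$ is twice continuously differentiable, $\Vert\delta(w_l,\hat{w})\Vert$ and $\Vert\delta(w^*,\hat{w})\Vert$ are of order $\Delta^2$ and hence are $\le L_2\Delta$ once $\Delta$ is small; thus $\delta(w_l,\hat{w}),\delta(w^*,\hat{w})\in\mathcal{B}(0,L_2\Delta)$ as required.

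The remaining step is the estimate of $\Vert\delta(w_l,\hat{w})-\delta(w^*,\hat{w})\Vert$. Writing this difference as $g(P_yw_l)-g(P_yw^*)-G^{\top}(w_l-w^*)=\int_0^1\bigl(\nabla g(P_yw^*+t\,P_y(w_l-w^*))-\nabla g(P_y\hat{w})\bigr)^{\top}P_y(w_l-w^*)\,\mathrm{d}t$, I would bound the integrand by the local Lipschitz continuity of $\nabla g$ (with constant $M$ on a fixed neighbourhood of $P_y\hat{w}$), using that $P_yw^*+t\,P_y(w_l-w^*)$ stays within $\sqrt{n_y}\,\Delta$ of $P_y\hat{w}$, and use $\Vert P_yx\Vert\le\Vert x\Vert$ (as $P_y$ is a selection matrix) to obtain $\Vert\delta(w_l,\hat{w})-\delta(w^*,\hat{w})\Vert\le M\sqrt{n_y}\,\Delta\,\Vert w_l-w^*\Vert$. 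Combined with Assumption~\ref{as:invertiblePLP} this gives $\Vert w_l+r_{l+1}-w^*\Vert\le L_1M\sqrt{n_y}\,\Delta\,\Vert w_l-w^*\Vert$, so the claim holds with $L:=L_1M\sqrt{n_y}$ and any $\Delta$ below the minimum of $\bar{\Delta}$, the threshold making the residuals $\le L_2\Delta$, and $1/L$ (the last so that also $L\Delta\le1$). I expect the main obstacle to be not any single inequality but the bookkeeping ensuring the hypotheses of Assumption~\ref{as:invertiblePLP} are met — in particular that $\bar{w}$ and the limit point $w^*$ lie in $\mathcal{B}_{\infty}(\hat{w},\Delta)$ and that the residuals satisfy $\delta(w_l,\hat{w}),\delta(w^*,\hat{w})\in\mathcal{B}(0,L_2\Delta)$, which is precisely what forces $\Delta$ to be taken small; the Taylor estimate itself is routine.
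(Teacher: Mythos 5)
Your proposal is correct and follows essentially the same route as the paper's proof: reduce the left-hand side to $\Vert w_{\mathrm{PLP}}^*(w_l;\hat{w},\Delta)-w_{\mathrm{PLP}}^*(w^*;\hat{w},\Delta)\Vert$, invoke Assumption~\ref{as:invertiblePLP} to bound this by $L_1\Vert\delta_l-\delta^*\Vert$, and then estimate $\Vert\delta_l-\delta^*\Vert\leq\tilde{L}\Delta\Vert w_l-w^*\Vert$ via the fundamental theorem of calculus and Lipschitz continuity of the Jacobian. You are in fact somewhat more explicit than the paper in verifying the hypotheses of the assumption (trust-region membership of $w_l$ and $w^*$, and the residual bound $\delta\in\mathcal{B}(0,L_2\Delta)$), but the argument is the same.
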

\begin{proof}
The proof follows the same argument as the proof of Theorem \ref{theo:localContractionJosephyNewton} given in \cite{Kiessling2022}. Note that $w_l,\,w_l+r_{l+1}\in\mathcal{B}_{\infty}(\hat{w}, \Delta)$. Due to Assumption \ref{as:invertiblePLP}, it holds that
\begin{align*}
    \Vert w_{l} + r_{l+1} - w^* \Vert &=
    \Vert w_{\mathrm{PLP}}^*(w_{l};\,\hat{w},\Delta) - w_{\mathrm{PLP}}^*(w^*;\,\hat{w},\Delta) \Vert\\
&\leq L_1 \Vert \delta_l - \delta^*\Vert.
\end{align*}
Using the fundamental theorem of calculus, Lipschitz continuity of the Jacobian of $g(P_y \cdot)$ with Lipschitz constant $\tilde{L}>0$ and $w_l \in \mathcal{B}_{\infty}(\hat{w},\,\Delta)$, for $\delta_l$ it can be shown that
\begin{align*}
&\Vert\delta_l - \delta^*\Vert = \Vert g(P_y w_l)-g(P_y w^*)-G^{\top}(w_l-w^*)\Vert\\
&\leq \int_0^1 \tilde{L} \Vert w^* + t(w_l-w^*) -\hat{w}\Vert \mathrm{d}t\:\Vert w_l-w^*\Vert\\
&\leq \tilde{L}\,\Delta\,\Vert w_l-w^*\Vert.
\end{align*}
With $L:=L_1\, \tilde{L}$, we get $\Vert w_{l+1} - w^* \Vert \leq L \Delta\,  \Vert w_{l}-w^*\Vert$.
In particular $L, \Delta$ are the same as in Theorem \ref{theo:localContractionJosephyNewton}.
\end{proof}
In order to show convergence, we need a bound on the calculated least squares solutions $\gamma_{l+1}$.
\begin{assumption}[Boundedness of $\gamma_l$]
\label{as:boundedGamma}
The $\gamma_{l}$ defined in Algorithm \ref{alg:AndersonAcc_depthD} are uniformly bounded, i.e., there exists a $D > 0$ such that $\Vert \gamma_l \Vert_{\infty} \leq D$, $\forall l \in \{2,\,3,\ldots\}.$
\end{assumption}

\begin{theorem}[Convergence of AA($d$)]
Let Assumption \ref{as:invertiblePLP} and \ref{as:boundedGamma} hold, then the iterates produced by Algorithm \ref{alg:AndersonAcc_depthD} with depth $d$ converge linearly, i.e.,
\begin{align}
\label{eq:convAnderson}
    \Vert w_{l+1}-w^* \Vert \leq L\Delta \bar{D} \sum_{j=l-d}^{l}\Vert w_j -w^* \Vert,
\end{align}
where $\bar{D}:=\max\{2 D,\,1+ D\}$.
\end{theorem}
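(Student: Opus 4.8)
The plan is to expand the Anderson update from line 14 of Algorithm \ref{alg:AndersonAcc_depthD} into a telescoping combination of the quantities $w_j + r_{j+1}$ for $j$ in the current memory window, estimate each of those by Lemma \ref{lemma:BoundedW}, and control the combination coefficients using Assumption \ref{as:boundedGamma}. First I would write $w_{l+1}$ before projection as $v_{l+1} := w_l + r_{l+1} - (E_l + F_l)\gamma_{l+1}$. The key algebraic observation is that the columns of $E_l$ are the consecutive differences $w_j - w_{j-1}$ and the columns of $F_l$ are the consecutive differences $r_{j+1} - r_j$, so the $i$-th column of $E_l + F_l$ equals $(w_{l-i+1} + r_{l-i+2}) - (w_{l-i} + r_{l-i+1})$. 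Hence $v_{l+1}$ is an affine combination, with coefficients built from the entries of $\gamma_{l+1}$, of the points $\{w_j + r_{j+1}\}_{j=l-m}^{l}$: specifically $v_{l+1} = (w_l + r_{l+1}) - \sum_{i=1}^{m} \gamma_{l+1}^i\big[(w_{l-i+1}+r_{l-i+2}) - (w_{l-i}+r_{l-i+1})\big]$. Since $w^*$ is a fixed point (so $w^* = w^* + r^*$ with $r^* = 0$) and the affine coefficients sum to $1$, I can subtract $w^*$ term by term without creating a residual constant.

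Next I would use nonexpansiveness of the projection $\Pi_{\mathcal{B}_\infty(\hat w,\Delta)}$ — which fixes $w^* \in \mathcal{F}\subseteq \mathcal{B}_\infty(\hat w,\Delta)$ — to pass from $v_{l+1}$ to $w_{l+1}$: $\Vert w_{l+1} - w^*\Vert \le \Vert v_{l+1} - w^*\Vert$. Then apply the triangle inequality to the affine combination above. Each term $\Vert w_j + r_{j+1} - w^*\Vert$ is bounded by $L\Delta\Vert w_j - w^*\Vert$ via Lemma \ref{lemma:BoundedW}. Collecting coefficients: the term $w_l + r_{l+1} - w^*$ appears with weight $1 + |\gamma_{l+1}^1| \le 1 + D$; the "interior" terms $w_j + r_{j+1} - w^*$ for $l-m < j < l$ each appear in two consecutive telescoping differences, so with weight $|\gamma_{l+1}^i| + |\gamma_{l+1}^{i+1}| \le 2D$; and the last term $w_{l-m} + r_{l-m+1} - w^*$ appears with weight $|\gamma_{l+1}^m| \le D \le 2D$. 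Bounding every coefficient by $\bar D := \max\{2D,\,1+D\}$ and noting $m \le d$ gives
\[
\Vert w_{l+1} - w^*\Vert \le L\Delta\,\bar D \sum_{j=l-m}^{l} \Vert w_j - w^*\Vert \le L\Delta\,\bar D \sum_{j=l-d}^{l}\Vert w_j - w^*\Vert,
\]
which is \eqref{eq:convAnderson}. (For the first few iterations where $m = l < d$, one can set the missing earlier iterates' contributions to zero, or start the estimate once $l \ge d$; I would remark on this rather than belabor it.)

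The main obstacle I anticipate is the bookkeeping of the telescoping coefficients — making sure the interior-versus-endpoint weight pattern is correct and that the constant $\bar D = \max\{2D, 1+D\}$ genuinely dominates every coefficient — together with justifying carefully that $w_j, w_j + r_{j+1} \in \mathcal{B}_\infty(\hat w,\Delta)$ so that Lemma \ref{lemma:BoundedW} applies to every term in the window (this is where one uses that all Anderson iterates are clipped into the trust region, and that $\Delta \le \bar\Delta$ with $L\Delta \le 1$ from Theorem \ref{theo:localContractionJosephyNewton}). A secondary subtlety is that \eqref{eq:convAnderson} alone is not a contraction bound unless $(d+1)L\Delta\bar D < 1$; but the statement as written only claims the linear-in-the-window estimate, so establishing that inequality is all that is required here, and any genuine contraction/convergence conclusion would follow by a standard argument on the shifted sequence once $\Delta$ is taken small enough.
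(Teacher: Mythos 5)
Your proposal is correct and follows essentially the same route as the paper's proof: expand the Anderson update into a telescoping affine combination of the points $w_j+r_{j+1}$ whose coefficients sum to one, subtract $w^*$ termwise, use nonexpansiveness of the trust-region projection, bound each term via Lemma \ref{lemma:BoundedW}, and dominate the coefficients $|1-\gamma^1|$, $|\gamma^{i}-\gamma^{i+1}|$, $|\gamma^{m}|$ by $\bar D=\max\{2D,1+D\}$ using Assumption \ref{as:boundedGamma}. Your added remarks on the $m=\min\{l,d\}<d$ start-up phase and on the fact that \eqref{eq:convAnderson} only yields a genuine contraction when $(d+1)L\Delta\bar D<1$ are accurate refinements the paper glosses over, but the core argument is identical.
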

\begin{proof}
The proof is similar to the convergence proof in \cite{Pollock_newton}. For simplicity of presentation, the iteration index of $\gamma_{l+1}$ will be dropped in this proof. Expanding the update step in Algoritm \ref{alg:AndersonAcc_depthD} yields
\begin{align*}
\check{w}_{l+1}= \,&w_l + r_{l+1}\\
&- \sum_{j=l-d+1}^l ((w_j -w_{j-1}) + (r_{j+1}-r_j))\gamma^{l-j+1}\\
= \,&(w_l + r_{l+1})(1-\gamma^1)\\
&+ \sum_{j=l-d+1}^{l-1} (w_j + r_{j+1})(\gamma^{l-j} - \gamma^{l-j+1})\\
&+ (w_{l-d} + r_{l-d+1})\gamma^d.
\end{align*}
Using
\begin{align*}
    w^* = w^*((1-\gamma^1) + (\gamma^1 - \gamma^2) + \ldots + (\gamma^{d-1} - \gamma^d) + \gamma^d)
\end{align*}
we obtain
\begin{align*}
\check{w}_{l+1}-w^*= \,&(w_l + r_{l+1} -w^*)(1-\gamma^1)\\
&+ \sum_{j=l-d+1}^{l-1} (w_j + r_{j+1} -w^*)(\gamma^{l-j} - \gamma^{l-j+1})\\
&+ (w_{l-d} + r_{l-d+1} -w^*)\gamma^d.
\end{align*}
Taking the norm $\Vert \cdot \Vert$, using the triangle inequality, recalling that $w_{l+1} = \Pi_{\mathcal{B}_{\infty}}(\check{w}_{l+1})$ with $\Vert w_{l+1} -w^*\Vert \leq \Vert \check{w}_{l+1} - w^*\Vert$ and applying
Lemma \ref{lemma:BoundedW} yields
\begin{align*}
\Vert w_{l+1}-w^*\Vert \leq \,&L\Delta\Vert w_l -w^*\Vert(1-\gamma^1)\\
&+ L\Delta\sum_{j=l-d+1}^{l-1} \Vert w_j -w^*\Vert(\gamma^{l-j} - \gamma^{l-j+1})\\
&+ L\Delta\Vert w_{l-d} -w^*\Vert\gamma^d.
\end{align*}
Due to Assumption \ref{as:boundedGamma}, choosing $\bar{D}:=\max\{2 D,\,1+ D\}\geq\max\{2 \Vert \gamma\Vert_{\infty},\,1+ \Vert \gamma\Vert_{\infty}\}$ yields \eqref{eq:convAnderson}.
\end{proof}
\begin{remark}
The proof of the projection ratio condition for the inner iterations of FSLP is directly applicable to AA($d$). Therefore, global convergence of the outer iterations is also guaranteed if Anderson acceleration is used in the inner iterations.
\end{remark}
If FSLP is combined with the Anderson acceleration method, it will be denoted by AA($d$)-FSLP.

\section{Simulation Example}
\label{section_example}
In this section, the performance of the AA($d$)-FSLP algorithm is demonstrated on a time-optimal P2P motion planning problem of a parallel SCARA robot. Firstly, implementation details are given and the optimal control problem is defined. Secondly, the improved contraction rate in the feasibility phase of AA($d$)-FSLP in comparison to FSLP is demonstrated. Finally, the convergence improvement of the outer iterations is illustrated on a fully determined and an under determined SCARA problem.

\subsection{Implementation}
We use the Python interface of the open source software \texttt{CasADi} \citep{Andersson2019} to model the optimization problem and for a prototypical implementation of the AA($d$)-FSLP solver. As LP solver, we use the dual simplex algorithm of \texttt{CPLEX} version 12.8 \citep{cplex2017v12}, which can be called from \texttt{CasADi}. 
 The parameters of the outer algorithm, in Algorithm \ref{alg:feas_iter}, and \ref{alg:AndersonAcc_depthD} are chosen as in \cite{Kiessling2022}. The simulations were carried out on an Intel Core i7-10810U CPU.

\subsection{Formulation of the P2P motion problem}
AA($d$)-FSLP is tested on a time-optimal P2P motion problem of the following form:
\begin{subequations}
\label{OCP}
\begin{align}
\min_{\substack{x_0,\ldots,\,x_N \\ u_0,\ldots,\,u_{N-1} \\ s_0,\,s_N,\,T}} &T+ \mu_0^{\top} s_0 +\mu_N^{\top} s_N\\
\mathrm{s.t.}\quad &-s_0 \leq x_0-\overline{x}_0 \leq s_0,\\
\quad &x_{k+1}=f(x_k,\,u_k,\,\tfrac{T}{N}),&&\hspace{-6.5mm}k=0,\dots,\,N-1, \label{eq:ms_constraint}\\
\quad &u_k \in \mathbb{U}_k,&&\hspace{-6.5mm}k=0,\dots,\,N-1, \label{eq:input_constraint}\\
\quad &x_{k} \in \mathbb{X}_{k},&&\hspace{-6.5mm}k=0,\dots,\,N, \label{eq:state_constraint}\\
\quad &e(x_k,\,u_k)\leq 0,&&\hspace{-6.5mm}k=0,\dots,\,N-1, \label{eq:stage_constraint}\\
\quad &-s_N \leq x_N-\overline{x}_N \leq s_N.
\end{align}
\end{subequations}
where $x_k\in\mathbb{R}^{n_x},\,  u_k\in\mathbb{R}^{n_u},\,s_0,\,s_N\in\mathbb{R}^{n_x}$ denote the state, control, and slack variables for horizon length $N\in\mathbb{N}$. The time horizon is given by $T\in\mathbb{R}_{>0}$ and the multiple shooting time interval size is given by $h:=\frac{T}{N}$. We denote the start and end points by $\bar{x}_0,\,\bar{x}_N\in\mathbb{R}^{n_x}$. Let $f\colon \mathbb{R}^{n_x}\times\mathbb{R}^{n_u} \to \mathbb{R}^{n_x}$ and $e\colon\mathbb{R}^{n_x}\times\mathbb{R}^{n_u} \to \mathbb{R}^{n_e}$ denote the system dynamics and the stage constraints, respectively. Additionally, let $\mu_0,\,\mu_N\in\mathbb{R}^{n_s}_{> 0}$ denote penalty parameters and let $\mathbb{U}_k,\,\mathbb{X}_k$ denote convex polytopes. Note that the penalty terms in the objective function are needed such that AA($d$)-FSLP can be started from a feasible point. All elements of \eqref{OCP} are further discussed in the following section.

\subsection{SCARA robot motion planning problem}
The parallel SCARA robot, shown in Fig. \ref{fig:SCARA}, is a two-$\mathrm{DOF}$ five-bar planar parallel manipulator composed by two 2-link arms whose end-effectors are constrained to be attached to a revolute joint $j_5$, forming a loop closure constraint. 
\begin{figure}[htbp]  
\subfigure{\includegraphics[width = 0.50\linewidth]{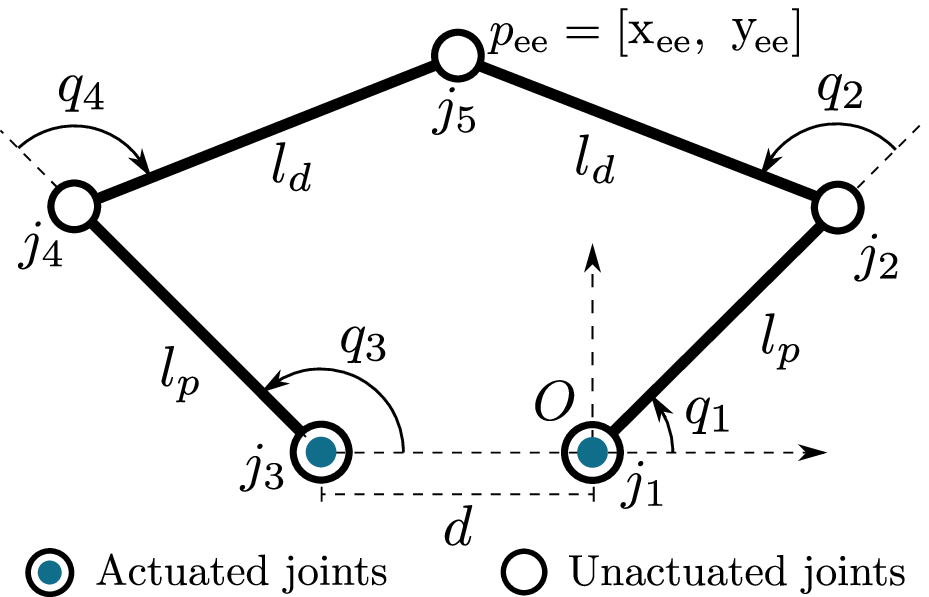}} 
\subfigure{\includegraphics[width = 0.39\linewidth]{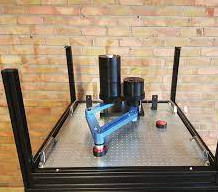}}  
\caption{Schematic (left) and photograph (right) of the parallel SCARA robot. $j_1,\ j_3$ denote the actuated joints, $j_2,\ j_4,\ j_5$ the unactuated joints, $q_i$ the angular position of joint $j_i$, $d$ the distance between the actuated joints, $l_p$ the length of the links proximal to the actuated joints, $l_d$ the length of the links distal to the actuated joints, and $p_{\mathrm{ee}}$ the position of the end-effector.}
\label{fig:SCARA}
\end{figure}
Let $q := \begin{bmatrix} q_1, & q_3 \end{bmatrix}^\top \in \mathbb{R}^2$ denote the independent coordinates, and $\bar{q} := \sigma(q) = \begin{bmatrix} q_1, & q_2(q), & q_3, & q_4(q) \end{bmatrix}^\top \in \mathbb{R}^4$ the generalized coordinates of the system.\\
Considering the  state vector $x := \begin{bmatrix} q^{\top}, & \dot{q}^{\top} \end{bmatrix}^\top \in \mathbb{R}^4$ and the control (torque) input $u := \tau \in \mathbb{R}^2$, the dynamics of the parallel SCARA robot are expressed by the ordinary differential equation (ODE) \citep{Cheng_2011}
\begin{equation} \label{eq:ode}
\dot{x} = f_{\mathrm{ode}}(x,u) := \begin{bmatrix} \dot{q} \\  {M}^{-1}(\bar{q})(\tau  - {F}(\bar{q},\dot{\bar{q}}))\end{bmatrix},
\end{equation}
where ${M}$ is the inertia matrix and ${F}$ is the vector of Coriolis and centrifugal effects.
Function $f_{\mathrm{ode}}$ is generated by using the rigid-body dynamics library \texttt{Robotran} \citep[Sec. 2.2]{robotran} and takes into account (i) the loop closure constraint in the acceleration level by using the coordinate partitioning  technique \citep{Wehage_1982},
and (ii) the dependency of the generalized coordinates $\bar{q} := \sigma(q)$ on $q$ \citep{Ouyang_2004, Cheng_2011}.\\
The constraints of the time-optimal P2P motion problem \eqref{OCP} for the parallel SCARA robot are defined as follows. Function $f$ in \eqref{eq:ms_constraint} is generated by discretizing $f_{\mathrm{ode}}$ in \eqref{eq:ode} by means of an explicit $4^{\mathrm{th}}$-order Runge-Kutta integrator. The convex polytopes $\mathbb{U}_k$ and $\mathbb{X}_k$, defined by lower and upper bounds imposed to $\tau$ and $\bar{q}$, respectively, set box constraints on the inputs $u_k$ and states $x_k$ in \eqref{eq:input_constraint} and \eqref{eq:state_constraint}. Two instances of the stage constraint \eqref{eq:stage_constraint} are included, namely
\begin{align}
  & \Vert\dot{p}_{\mathrm{ee}}\Vert^2 \leq V_{\mathrm{max}}^{2}, \label{eq:max_vel_constraint}
  \\
  & \begin{cases}
    \mathbf{n}_{\mathrm{a}}^\top p_{\mathrm{ee}} + \mathbf{n}_{\mathrm{b}} + r_{\mathrm{safe}} \leq 0, & \\
    \forall \upsilon \in \mathcal{V}_{\mathrm{obs}},\ \mathbf{n}_{\mathrm{a}}^\top \upsilon + \mathbf{n}_{\mathrm{b}} \geq 0, & \\
    \Vert\mathbf{n}\Vert_{\infty} \leq 1, & 
  \end{cases} \label{eq:hyperplane_constraints}
\end{align}
where \eqref{eq:max_vel_constraint} sets an upper bound $V_{\mathrm{max}} \in \mathbb{R}_{>0}$ to the squared $\ell_2$-norm of the velocity of the end-effector, and \eqref{eq:hyperplane_constraints} sets collision avoidance constraints based on separating hyperplanes defined by the parameter $\mathbf{n} := \begin{bmatrix}\mathbf{n}_{\mathrm{a}}^\top & \mathbf{n}_{\mathrm{b}} \end{bmatrix}^\top \in \mathbb{R}^3$, a safety margin $r_{\mathrm{safe}} \in \mathbb{R}_{\geq 0}$, and an obstacle defined by a set of vertices $\mathcal{V}_{\mathrm{obs}}$.

\begin{remark}
Please note that the inclusion of constraint \eqref{eq:max_vel_constraint} prevents the system from being fully determined, as explained in Section \ref{sec:determined_case}.
\end{remark}
The parameters in the simulation example are taken from the SCARA robot seen in Fig. \ref{fig:SCARA} on the right. It holds that $l_d= 0.28 \mathrm{[m]}$, $l_p= 0.18 \mathrm{[\mathrm{m}]}$. The start point in cartesian space is given by $\bar{p}_{0}=[0.0,\, 0.115]^{\top}[m]$ and  the end point by $\bar{p}_{N}=[0.0,\, 0.405]^{\top}[\mathrm{m}]$ transforming to $\bar{x}_0$ and $\bar{x}_N$ through inverse kinematics. The obstacle is a square whose vertices are defined by its center of gravity at $[0.0,\,0.2]^{\top}[\mathrm{m}]$ and its side length of $0.02 \mathrm{[m]}$. The lower bounds on the joint angles are given by $\bar{q}_{\mathrm{min}} = [-\frac{\pi}{6},\,-\frac{11\pi}{12},\,\frac{\pi}{6},\,-\frac{11\pi}{12}]^{\top} [\mathrm{rad}]$ and the upper bounds by $\bar{q}_{\mathrm{max}} = [\frac{5\pi}{6},\,\frac{11\pi}{12},\,\frac{7\pi}{6},\,\frac{11\pi}{12}]^{\top} [\mathrm{rad}]$. For the bounds on the torques it holds $\tau_{\mathrm{max}} = -\tau_{\mathrm{min}} = [5.0,\,5.0]^{\top}[\mathrm{Nm}]$. The maximum velocity is set equal to $V_{\mathrm{max}}=2.0[\frac{\mathrm{m}}{\mathrm{s}}]$.\\
For an initialization at the point $[0.05,\,0.12]^{\top}[\mathrm{m}]$ in cartesian space, the time-optimal motion trajectory obtained with the FSLP algoritm and initialized with a constant control input of $[0.05, -0.035]^{\top}[\mathrm{Nm]}$ over a time horizon of $0.7\mathrm{[s]}$ is shown in Fig. \ref{fig:SCARA_trajectory}.\\
\begin{figure}[thpb]
\includegraphics[width=0.48\textwidth]{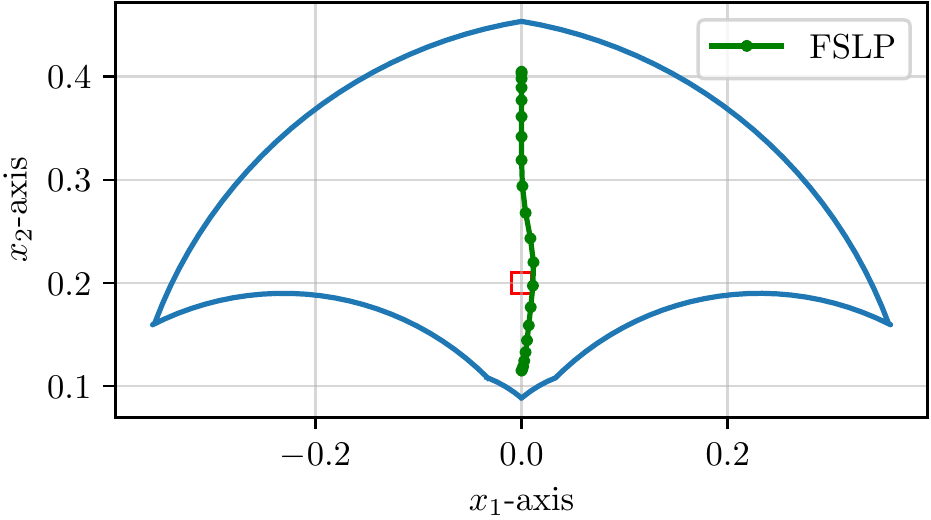}
\caption{Optimal trajectory of FSLP for the time-optimal P2P motion problem. The blue contours denote the workspace of the SCARA robot.}
\label{fig:SCARA_trajectory}
\end{figure}
The AA($d$)-FSLP algorithm is tested on a set of perturbed SCARA robot optimization problems. Small, uniformly distributed perturbations around zero are added to the starting point and to the end point of the P2P motion problem. In total 100 different optimization problems are considered. In the following, the set of test problems will be denoted as \textit{SCARA test set}.

\subsection{Observation of convergence rate in inner iterations}
It is observed that AA($d$) improves the convergence rate of the inner iterations. For an initial trust-region radius of $\Delta = 0.25$ at the initialization of the unperturbed SCARA problem, Fig. \ref{fig:contraction_steps} illustrates the convergence of the inner iterates of FSLP compared to AA($d$)-FSLP with $d\in\{1,\,5,\,15\}$.
\begin{figure}[thpb]
\includegraphics[width=0.48\textwidth]{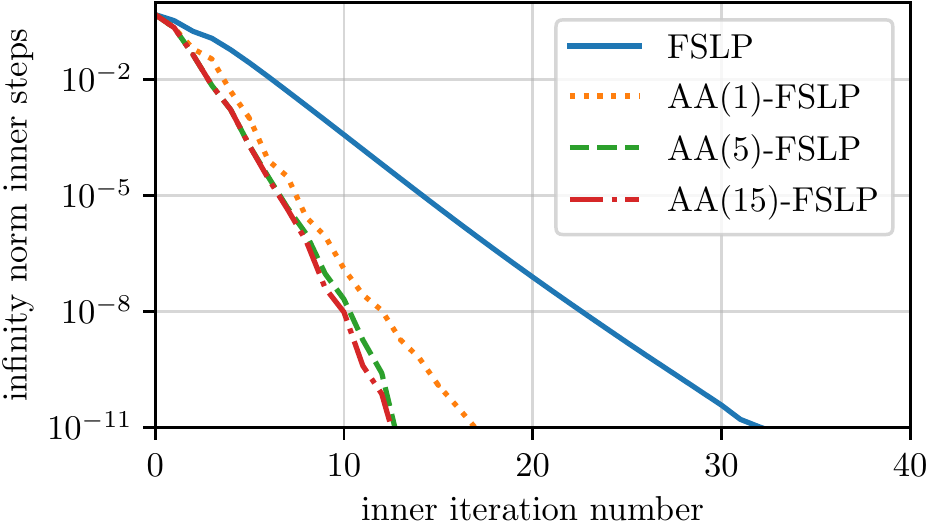}
\caption{Comparison of the convergence rates of FSLP and AA($d$)-FSLP for $d\in\{1,\,5,\,15\}$.}
\label{fig:contraction_steps}
\end{figure}
The plot shows that the contraction rate is improved by higher memory depths $d$. For the demonstrated problem, we also see that the contraction rate cannot be significantly improved by high values of $d$ due to the limited number of iterations needed for convergence. Since AA($d$)-FSLP takes fewer iterations to converge in the inner iterations, the constraints need to be evaluated fewer times. On the SCARA test set, the AA($d$)-FSLP can significantly decrease the number of constraint evaluations. The mean values of the constraint evaluations $\bar{n}_{\mathrm{con}}$ are shown in Table \ref{tab:Mean_eval_g}.
\begin{table}[]
\caption{Means of number of constraint evaluations, outer iterations, and wall time on SCARA test set.}
\label{tab:Mean_eval_g}
\centering
\begin{tabular}{l c c c c}
\toprule
                         & FSLP   & AA(1)-FSLP & AA(5)-FSLP & AA(15)-FSLP \\
\midrule
$\bar{n}_{\mathrm{con}}$ & 448.84 & 274.69     & 196.57     & 193.98 \\   
$\bar{n}_{\mathrm{iter}}$ & 49.68 & 32.31     & 27.62     & 27.70 \\ 
$\bar{t}_{\mathrm{wall}} [\mathrm{s}]$ & 5.01 & 2.914     & 2.24     & 2.34 \\ 
\bottomrule
\end{tabular}
\end{table}

\subsection{Fully Determined Case and Under Determined Case} \label{sec:determined_case}
\cite{Kiessling2022} exploit the property when an NLP is fully determined to achieve locally quadratic convergence of the FSLP algorithm. An NLP is fully determined if the following condition is satisfied \citep{Messerer2021}:
\begin{definition}[Fully Determined Solution]
Let the NLP \eqref{eq:nlp} be given and let the set of active constraints at a feasible point $w\in\mathbb{R}^{n_w}$ be $\mathcal{A}(w)$.
A solution $w^*\in\mathbb{R}^{n_w}$ of \eqref{eq:nlp} is \textbf{fully determined} by the constraints if $n_g+|\mathcal{A}(w^*)|=n_w$ and LICQ holds at $w^*$. If $n_g+|\mathcal{A}(w^*)|<n_w$, the solution is called \textbf{under determined}.
\end{definition}
If the optimal solution of the optimization problem \eqref{eq:nlp} is fully determined, the solution lies in a vertex of the active constraints, which reduces the optimization problem to finding a feasible point. In this case, no Hessian information is needed and SLP converges quadratically \citep{Messerer2021}. For time-optimal control problems, it is easy to construct counter-examples where the solution is not fully determined. In the case of the SCARA robot, we impose constraints on the speed of the end effector that are reasonable for the safe operation of the robot. These constraints restrict the speed within an $l_2$-ball, which does not have vertices. This prevents the problem from being fully determined. 
\begin{figure}[thpb]
\includegraphics[width=0.48\textwidth]{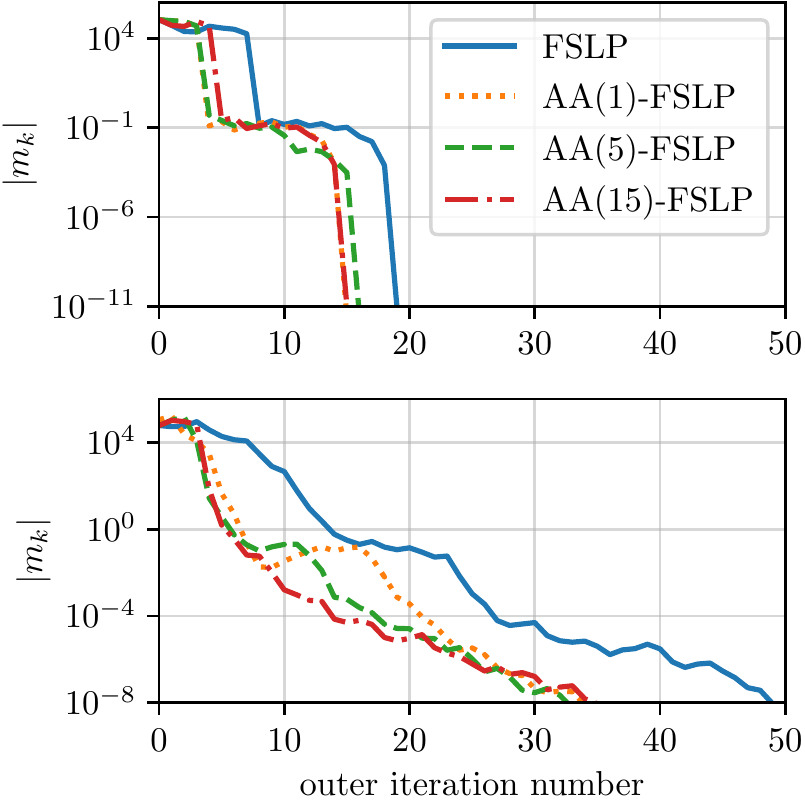}
\caption{Comparison of the convergence with respect to the absolute value of the convergence metric $m_k$ of FSLP and AA($d$)-FSLP for $d\in\{1,\,5,\,15\}$. The top figure shows the convergence in the case of the fully determined system and the lower figure for the under determined system.}
\label{fig:contraction_outer}
\end{figure}
In Fig. \ref{fig:contraction_outer}, the difference in convergence for a fully determined and an under determined system is shown. The fully determined problem as seen in the upper figure of Fig. \ref{fig:contraction_outer} can be reached by removing constraint \eqref{eq:max_vel_constraint} from the optimization problem \eqref{OCP}. It is seen that the difference in the number of outer iterations taken until convergence is small in the fully determined problem due to the local quadratic convergence. But in the case of an under determined problem, AA($d$) can achieve a significant improvement in the convergence of outer iterates towards an optimal point. This is due to the fact that in a case where the termination heuristic of FSLP would already terminate the inner iterations since the convergence rate is above a given threshold, the accelerated methods often fulfill the desired convergence rates. Instead of decreasing the trust-region radius, this gives longer steps that yield faster convergence. In \cite{Tenny2004}, where a feasible sequential quadratic programming method is investigated the same effect is observed.\\
Table \ref{tab:Mean_eval_g} reports the mean of the number of iterations $\bar{n}_{\mathrm{iter}}$ of FSLP and AA($d$)-FSLP. On average, the number of outer iterations is reduced by AA($d$). This is also represented in the average wall times needed for solving the problem. A speed-up of at least $40$\% is achieved by using AA($d$).

\section{Conclusion}
\label{section_conclusion}
This paper proposed an acceleration method for the feasibility iteration phase of FSLP. The reduction of outer iterations and constraint evaluations was successfully demonstrated on a set of time-optimal point-to-point motion problems of a SCARA robot. As a consequence, the overall solution time was decreased. Especially in the case of an under determined system, the outer iteration number of FSLP was significantly reduced. Moreover, it was demonstrated that AA($d$) can reduce the number of feasibility iterates which can be advantageous in real-time applications such as MPC, MHE, and ILC. A more detailed investigation of Anderson acceleration in zero-order optimization algorithms is the subject of current research.


{\small
\bibliography{bibliography}             

\begin{thebibliography}{20}
\providecommand{\natexlab}[1]{#1}
\providecommand{\url}[1]{\texttt{#1}}
\providecommand{\urlprefix}{URL }
\expandafter\ifx\csname urlstyle\endcsname\relax
  \providecommand{\doi}[1]{doi:\discretionary{}{}{}#1}\else
  \providecommand{\doi}{doi:\discretionary{}{}{}\begingroup
  \urlstyle{rm}\Url}\fi

\bibitem[{Anderson(1965)}]{Anderson1965}
Anderson, D.G. (1965).
\newblock Iterative procedures for nonlinear integral equations.
\newblock \emph{J. ACM}, 12(4), 547–560.

\bibitem[{Andersson et~al.(2019)Andersson, Gillis, Horn, Rawlings, and
  Diehl}]{Andersson2019}
Andersson, J.A.E., Gillis, J., Horn, G., Rawlings, J.B., and Diehl, M. (2019).
\newblock {CasADi} -- a software framework for nonlinear optimization and
  optimal control.
\newblock \emph{Mathematical Programming Computation}, 11(1), 1--36.

\bibitem[{Baumg{\"a}rtner et~al.(2019)Baumg{\"a}rtner, Zanelli, and
  Diehl}]{Baumgaertner2019}
Baumg{\"a}rtner, K., Zanelli, A., and Diehl, M. (2019).
\newblock Zero-order moving horizon estimation.
\newblock In \emph{IEEE Conference on Decision and Control}.

\bibitem[{Baumg{\"a}rtner and Diehl(2020)}]{Baumgaertner2020a}
Baumg{\"a}rtner, K. and Diehl, M. (2020).
\newblock Zero-order optimization-based iterative learning control.
\newblock In \emph{IEEE Conference on Decision and Control}.

\bibitem[{Bock et~al.(2007)Bock, Diehl, Kostina, and Schl\"oder}]{Bock2007}
Bock, H.G., Diehl, M., Kostina, E.A., and Schl\"oder, J.P. (2007).
\newblock Constrained optimal feedback control of systems governed by large
  differential algebraic equations.
\newblock In \emph{Real-Time and Online PDE-Constrained Optimization}, 3--22.
  {SIAM}.

\bibitem[{Cheng et~al.(2011)Cheng, Lin, Hou, Tan, Huang, and
  Zhang}]{Cheng_2011}
Cheng, L., Lin, Y., Hou, Z.G., Tan, M., Huang, J., and Zhang, W.J. (2011).
\newblock Adaptive tracking control of hybrid machines: A closed-chain five-bar
  mechanism case.
\newblock \emph{{IEEE}/{ASME} Transactions on Mechatronics}, 16(6), 1155--1163.

\bibitem[{Conn et~al.(2000)Conn, Gould, and Toint}]{Conn2000}
Conn, A., Gould, N., and Toint, P. (2000).
\newblock \emph{{T}rust-{R}egion {M}ethods}.
\newblock MPS/SIAM Series on Optimization. SIAM, Philadelphia, USA.

\bibitem[{Cplex(2017)}]{cplex2017v12}
Cplex, I.I. (2017).
\newblock V12.8: User’s manual for cplex.
\newblock \emph{International Business Machines Corporation}.

\bibitem[{Docquier et~al.(2013)Docquier, Poncelet, and Fisette}]{robotran}
Docquier, N., Poncelet, A., and Fisette, P. (2013).
\newblock {ROBOTRAN}: a powerful symbolic gnerator of multibody models.
\newblock \emph{Mechanical Sciences}, 4(1), 199--219.

\bibitem[{Evans et~al.(2020)Evans, Pollock, Rebholz, and Xiao}]{Pollock2020a}
Evans, C., Pollock, S., Rebholz, L.G., and Xiao, M. (2020).
\newblock A proof that anderson acceleration improves the convergence rate in
  linearly converging fixed-point methods (but not in those converging
  quadratically).
\newblock \emph{SIAM Journal on Numerical Analysis}, 58(1), 788--810.

\bibitem[{Izmailov and Solodov(2014)}]{Izmailov2014}
Izmailov, A.F. and Solodov, M.V. (2014).
\newblock \emph{Newton-Type Methods for Optimization and Variational Problems}.
\newblock Springer International Publishing.

\bibitem[{Kiessling et~al.(2022)Kiessling, Zanelli, Nurkanovi\'c, Gillis,
  Diehl, Zeilinger, Pipeleers, and Swevers}]{Kiessling2022}
Kiessling, D., Zanelli, A., Nurkanovi\'c, A., Gillis, J., Diehl, M., Zeilinger,
  M., Pipeleers, G., and Swevers, J. (2022).
\newblock A feasible sequential linear programming algorithm with application
  to time-optimal path planning problems.
\newblock In \emph{Proceedings of 61st IEEE Conference on Decision and
  Control}. Cancun, Mexico.

\bibitem[{Messerer et~al.(2021)Messerer, Baumg{\"a}rtner, and
  Diehl}]{Messerer2021}
Messerer, F., Baumg{\"a}rtner, K., and Diehl, M. (2021).
\newblock Survey of sequential convex programming and generalized
  {G}auss-{N}ewton methods.
\newblock \emph{ESAIM: Proceedings and Surveys}, 71, 64--88.

\bibitem[{Ouyang et~al.(2004)Ouyang, Li, Zhang, and Guo}]{Ouyang_2004}
Ouyang, P., Li, Q., Zhang, W., and Guo, L. (2004).
\newblock Design, modeling and control of a hybrid machine system.
\newblock \emph{Mechatronics}, 14(10), 1197--1217.

\bibitem[{Pollock and Schwartz(2020)}]{Pollock_newton}
Pollock, S. and Schwartz, H. (2020).
\newblock Benchmarking results for the newton–anderson method.
\newblock \emph{Results in Applied Mathematics}, 8, 100095.

\bibitem[{Tenny et~al.(2004)Tenny, Wright, and Rawlings}]{Tenny2004}
Tenny, M., Wright, S., and Rawlings, J. (2004).
\newblock {N}onlinear model predictive control via feasibility-perturbed
  sequential quadratic programming.
\newblock \emph{Computational Optimization and Applications}, 28, 87--121.

\bibitem[{Toth and Kelley(2015)}]{Toth2015}
Toth, A. and Kelley, C.T. (2015).
\newblock Convergence analysis for anderson acceleration.
\newblock \emph{SIAM Journal on Numerical Analysis}, 53(2), 805--819.

\bibitem[{Wehage and Haug(1982)}]{Wehage_1982}
Wehage, R.A. and Haug, E.J. (1982).
\newblock Generalized coordinate partitioning for dimension reduction in
  analysis of constrained dynamic systems.
\newblock \emph{Journal of Mechanical Design}, 104(1), 247--255.

\bibitem[{Zanelli(2021)}]{Zanelli2021}
Zanelli, A. (2021).
\newblock \emph{Inexact methods for nonlinear model predictive control:
  stability, applications, and software}.
\newblock Ph.D. thesis, University of Freiburg.

\bibitem[{Zanelli et~al.(2019)Zanelli, Quirynen, and Diehl}]{Zanelli_2019}
Zanelli, A., Quirynen, R., and Diehl, M. (2019).
\newblock Efficient zero-order {NMPC} with feasibility and stability
  guarantees.
\newblock In \emph{2019 18th European Control Conference ({ECC})}. {IEEE}.

\end{thebibliography}
}
                                                   







\end{document}